\documentclass[12pt]{amsart}

\hfuzz1pc 

\newtheorem{theorem}{Theorem}[section]

\newtheorem{lemma}[theorem]{Lemma}
\newtheorem{proposition}[theorem]{Proposition}

\theoremstyle{definition}

\theoremstyle{remark}

\numberwithin{equation}{section}

\newcommand{\be}{\begin{equation}}

\newcommand{\ee}{\end{equation}}
\newcommand{\bn}{\begin{eqnarray}}
\newcommand{\en}{\end{eqnarray}}
\newcommand{\bd}{\begin{displaymath}}
\newcommand{\ed}{\end{displaymath}}
\newcommand{\bnn}{\begin{eqnarray*}}
\newcommand{\enn}{\end{eqnarray*}}

\begin{document}

\title[A crossed product of the CAR algebra] {A crossed product of the CAR algebra in the Cuntz algebra}

%
\author{M.A.\,Aukhadiev, A.S.\,Nikitin, A.S.\,Sitdikov}
\address[M.A.\,Aukhadiev, A.S.\,Nikitin, A.S.\,Sitdikov]{Kazan State Power Engineering University\\
        Kazan, Russia, 420066}
\email[M.A.\,Aukhadiev]{m.aukhadiev@gmail.com}
\email[A.S.\,Nikitin]{drnikitin@rambler.ru }
\email[A.S.\,Sitdikov]{airat\_vm@rambler.ru}

\thanks{Research supported in part by Russian Foundation for Basic
Research, Grants 14-01-31358, 13-02-97054, 12-01-97016.}

\date{April 2, 2014}

\subjclass{Primary 46L05, 47L65}
\begin{abstract}
In this paper we show that the Cuntz algebra can be represented as a C*-crossed product by endomorphism of the canonical anticommutation relations (CAR) algebra, generated by the standard recursive fermion system.
\end{abstract}
\maketitle

\section{Introduction}
The canonical anticommutation relations algebra (CAR), generated by fermion creation and annihilation operators, with its representations, has been  well established in both physical and mathematical aspects by the moment \cite{Kawamura}. The mathematical research of the CAR algebra is based primarily on the operator algebras theory \cite{Bratelli}.

 The recursive construction of the CAR algebra based on the Cuntz algebra $\mathcal{O}_2$ generators was described in \cite{Kawamura}. 
Since the Cuntz algebra is finitely generated, this method is an effective instrument for research on fermion systems properties in the framework and terms of the algebra $\mathcal{O}_2.$ 
Moreover, such construction is extremely useful for investigation of the quantum systems superselection structure \cite{DR} in the  algebraic quantum field theory framework \cite{Horuzhi}.
The superselection sectors are the unitary equivalence classes of irreducible representations of the observable algebra, which satisfy the so-called Doplicher-Haag-Roberts and Bucholz-Fredenhagen selection criterions \cite{DH,Buchholz}. Such representations can be described by means of localized endomorphisms of this algebra. Hence, each sector is identified with a set of unitary equivalent localized endomorphisms. These endomorphisms form a symmetric $C^*$-tensor category with  intertwining operators between localized endomorphisms as category morphisms between sectors. 
Hence, one can extend the observable algebra by the crossed product with the endomorphisms category\cite{DR2}. By the Doplicher-Roberts duality theorem \cite{DR3}, the automorphisms group of the resulting $C^*$-algebra (the field algebra) is a compact group
and the observable algebra is a subalgebra of the field algebra, consisting of fixed points with respect to the action of this group.

 In this work we consider the mathematical part of the problem. Using an injection of the CAR algebra into the Cuntz algebra, we construct a crossed product of this subalgebra by the group of integers with respect to one endomorphism of this algebra. We show that the resulting $C^\ast$-algebra coincides with the Cuntz algebra. In other words, the Cuntz algebra can be described as a $C^\ast$-crossed product of the CAR algebra, generated by the recursive fermion system \cite{Kawamura}.
 
 The first part of this work contains preliminaries. The second part is a desciption of $C^\ast$-crossed product construction of the CAR algebra in the Cuntz algebra in the framework shown in \cite{Anton}. Further we show that the constructed crossed product is isomorphic to the Cuntz algebra. We compare this construction to the well-known crossed product of the Cuntz-Krieger algebra.

The research is partially supposrted by RFBR grants 14-01-31358, 13-02-97054, 12-01-97016.

\section{Preliminaries}

The Cuntz algebra \cite{Cuntz, DR4} $\mathcal{O}_d$ $(d\geq 2)$ is a $C^*$-algebra, generated by isometries $\psi_1,\ \psi_2,\
\ldots,\ \psi_d$, which satisfy the following conditions:
\bn
&& \psi_i^* \, \psi_j = \delta_{i,j} I,                     \label{CR1}\\
&& \sum_{i=1}^d \psi_i \, \psi_i^* = I, \label{CR2}
\en
where $I$ is a unit in the algebra. The following standard notation is introduced for convenience:
$\psi_{i_1i_2\cdots i_m}\equiv \psi_{i_1}\psi_{i_2}\cdots
\psi_{i_m}$, $\psi^*_{i_1i_2\cdots i_m}\equiv \psi^*_{i_m}\dots
\psi^*_{i_2} \psi^*_{i_1}$ and $\psi_{i_1\cdots i_m;\,j_n\cdots
j_1}\equiv \psi_{i_1}\cdots \psi_{i_m} \psi_{j_n}^* \cdots
\psi_{j_1}^*$. Conditions \ref{CR1}, \ref{CR2} imply that the algebra $\mathcal{O}_d$ is generated by so-called \emph{monomials} -- operators of type $\psi_{i_1\cdots i_m;\,j_n\cdots j_1}$ as a linear space.

Now we define the \emph{canonical} unital $\ast$-endomorphism $\rho$ on the algebra $\mathcal{O}_d$.
\be
\rho(X) = \sum_{i=1}^d \psi_i X \psi_i^*, \quad X \in
\mathcal{O}_d. \label{c-endo}
\ee

It is well-known that the generators $a_m$ and $a_n^*$ $(m,\,n=1,\,2,\,\ldots)$
of the $C^*$-algebra CAR of fermions satisfy the following relations 
\begin{eqnarray}
&& \{ a_m, \, a_n \} = \{ a^*_m, \, a^*_n \} = 0,     \label{car-1}\\
&& \{ a_m, \, a^*_n \} = \delta_{m,n}I. \label{car-2}
\end{eqnarray}

In paper \cite{Kawamura} K. Kawamura showed that the CAr algebra is isomorphic to $\mathcal{O}^{U(1)}_2\subset\mathcal{O}_2$, which consists of such elements in $\mathcal{O}_2$, which are invariant under the standard action of the group $U(1)$. In other words, this subalgebra is generated by monomials
\begin{eqnarray}
&& \psi_{i_1} \cdots \psi_{i_k} \psi^*_{j_k} \cdots \psi^*_{j_1},
\label{UHF2}
\end{eqnarray}
where $i_1,\ \ldots,\ i_k,\ j_1,\ \ldots,\ j_k = 1,\ 2$. The action 
$\tau$ of group $U(1)$ on $\mathcal{O}_2$ is given by
\begin{equation} \tau(z)(\psi_i)= z \psi_i, \quad z\in U(1)\
\ i=1,\,2.
\end{equation}

Following the work \cite{Kawamura} we embed the CAR algebra into 
$\mathcal{O}_2$ by means of recursive construction, which is called  {\it the recursive fermion system (RFS)\/}. We give its definition below. In the above-mentioned work it is shown that there exists a map $\zeta$ on
$\mathcal{O}_{2d}$ such that
\begin{equation}
a_n = \zeta^{n-1}(\bold a), \quad n=1,\,2,\,\ldots
\end{equation}
satisfying \eqref{car-1} and \eqref{car-2} for a fixed element $\bold
a\in\mathcal{O}_{2d}$.

Let $\bold a\in\mathcal{O}_{d}$, $\zeta:\mathcal{O}_{d}\to\mathcal{O}_{d}$ be a linear map and $\varphi$ be a unital $\ast$-endomorphism
on $\mathcal{O}_{d}$. A triple $R=(\bold a, \,
\zeta, \, \varphi)$ is called {\it a recursive fermion system \/} in $\mathcal{O}_{d}$, if it satisfies the following conditions:

\begin{equation} \label{RFS1}
\bold a^2 = 0,\ \{ \bold a, \, \bold a^* \} = I,
\end{equation}
\begin{equation}        \label{RFS2}               
 \{ \bold a, \, \zeta(X) \} = 0, \ \ \zeta(X)^*=\zeta(X^*),
    X \in \mathcal{O}_{2},
\end{equation}           
\begin{equation} \label{RFS3}
\zeta(X)\zeta(Y)=\varphi(XY), \quad
    X,\,Y \in \mathcal{O}_{2}.                            
\end{equation}

 The embedding $\varPhi_R$ of the CAR algebra into $\mathcal{O}_{d}$, 
corresponding to $R=(\bold a, \, \zeta, \, \varphi)$ is defined by the image of generators $a_n$ $(n=1,\,2,\,\ldots)$ of the CAR algebra in the following way.
\begin{equation}
\varPhi_R(a_n) \equiv \zeta^{n-1}(\bold a) \equiv
(\,\underbrace{\zeta\circ\zeta\circ\cdots\circ\zeta}_{n-1}\,)(\bold
a), \quad n=1,\,2,\,\ldots. \label{RFS4}
\end{equation}
By virtue of conditions \eqref{RFS1}--\eqref{RFS3}, the following equations are verified.
$$ \{ \varPhi_R(a_m), \, \varPhi_R(a_n) \} = \varphi^{m-1}(\{
\bold a, \, \zeta^{n-m}(\bold a) \})
= \varphi^{m-1}(0) = 0, \quad m\leq n, $$
$$ \{ \varPhi_R(a_m), \, \varPhi_R(a_n)^* \} = \varphi^{m-1}(\{
\bold a, \, \varphi^{n-m}(\bold a^*)\})
= \varphi^{m-1}(0)=0, \quad m < n, $$
$$ \{ \varPhi_R(a_n), \, \varPhi_R(a_n)^* \} = \varphi^{n-1}(\{
\bold a, \, \bold a^*\}) =\varphi^{n-1}(I)=I.
$$
Denote by $\mathcal{A}_R \subset\mathcal{O}_{d}$ the image of embedding. $\mathcal{A}_R$ is called \emph{a CAR-subalgebra, corresponding to} $R$.

Consider the case $d=2$, and define the standard recursive fermion system $C=(\bold
a,\,\zeta,\,\varphi)$.
\begin{eqnarray}
&& \bold a\equiv \psi_1 \psi_2^*,                            \label{RFS5}\\
&& \zeta(X)\equiv \psi_1 X \psi_1^* - \psi_2 X \psi_2^*, \quad X
\in \mathcal{O}_2,
                                                      \label{RFS6}\\
&& \varphi(X)\equiv\rho(X)= \psi_1 X \psi_1^* + \psi_2 X \psi_2^*,
\quad
       X \in \mathcal{O}_2.                                 \label{RFS7}
\end{eqnarray}
Here  $\rho$ is the canonical endomorphism of the algebra  $\mathcal{O}_2$ \eqref{c-endo}. The CAR-subalgebra, corresponding to the standard recursive fermion system $C$ is denoted by
$\mathcal{A}_C$. We have  $\mathcal{A}_C=\mathcal{O}_2^{U(1)}$ (see \cite{Kawamura} for details).

\section{Crossed product}

In this part we construct the $C^\ast$-crossed product of the algebra 
$\mathcal{A}_C$ by an endomorphism $\delta$ of this algebra in the framework of $C^\ast$-crossed product, described in \cite{Anton,Lebedev}. Consider a map $\delta\colon \mathcal{A}_C\to \mathcal{O}_2$, for any $a\in  \mathcal{A}_C$ given by
\begin{equation}\label{delta}
\delta(a)=\psi_1a\psi_1^*.
\end{equation}

\begin{lemma}\label{hom}
The map $\delta$ is a ${}^\ast$-endomorphism $\delta\colon \mathcal{A}_C\to\mathcal{A}_C$.
\end{lemma}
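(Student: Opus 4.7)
The plan is to verify the three defining properties of a $*$-endomorphism (linearity, multiplicativity, adjoint-preservation) and then check that the image of $\mathcal{A}_C$ under $\delta$ stays inside $\mathcal{A}_C$. Linearity is immediate from the definition. For the adjoint, $\delta(a^*)=\psi_1 a^* \psi_1^* = (\psi_1 a \psi_1^*)^* = \delta(a)^*$, so $\delta$ is $*$-preserving. Multiplicativity is the one step that uses a Cuntz relation: $\delta(a)\delta(b)=\psi_1 a \psi_1^* \psi_1 b \psi_1^* = \psi_1 a b \psi_1^*=\delta(ab)$, since $\psi_1^*\psi_1=I$ by \eqref{CR1}.

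The only part requiring real content is showing that $\delta(a)\in\mathcal{A}_C$ whenever $a\in\mathcal{A}_C$. For this I would use the characterization $\mathcal{A}_C=\mathcal{O}_2^{U(1)}$, i.e.\ an element of $\mathcal{O}_2$ lies in $\mathcal{A}_C$ precisely when it is fixed by every $\tau(z)$, $z\in U(1)$. Given $a\in\mathcal{A}_C$, one computes
\[
\tau(z)\bigl(\psi_1 a \psi_1^*\bigr)=\tau(z)(\psi_1)\,\tau(z)(a)\,\tau(z)(\psi_1^*)=z\psi_1\cdot a\cdot \bar z\psi_1^*=\psi_1 a \psi_1^*,
\]
using that $\tau(z)$ is a $*$-automorphism, that $\tau(z)(a)=a$, and that $|z|=1$. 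Hence $\delta(a)\in\mathcal{O}_2^{U(1)}=\mathcal{A}_C$.

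Alternatively, one can argue on monomials: every element of $\mathcal{A}_C$ is a norm-limit of linear combinations of the balanced monomials \eqref{UHF2}, and prepending $\psi_1$ and appending $\psi_1^*$ produces another balanced monomial of the same type, which again lies in $\mathcal{A}_C$; the general case follows by linearity and continuity (and $\delta$ is contractive because $\psi_1$ is an isometry). I expect no real obstacle here: the lemma is essentially a direct verification, and the only subtlety is recognizing that $U(1)$-invariance is preserved by conjugation with $\psi_1$, which follows from the fact that the inner and outer $\psi$-factors carry opposite weights under $\tau(z)$.
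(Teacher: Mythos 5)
Your proof is correct, and the algebraic part (linearity, $*$-preservation, multiplicativity via $\psi_1^*\psi_1=I$) matches the paper exactly. Where you differ is in showing $\delta(\mathcal{A}_C)\subseteq\mathcal{A}_C$: your primary argument uses the fixed-point characterization $\mathcal{A}_C=\mathcal{O}_2^{U(1)}$ and the computation $\tau(z)(\psi_1 a\psi_1^*)=z\bar z\,\psi_1 a\psi_1^*=\psi_1 a\psi_1^*$, whereas the paper argues on generators: a balanced monomial of type \eqref{UHF2} with $k$ factors of each kind is sent to a balanced monomial with $k+1$ factors of each kind, and one concludes by linearity and continuity. Your alternative paragraph is precisely the paper's route. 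The $U(1)$-invariance argument is arguably cleaner, since it applies to an arbitrary element of $\mathcal{A}_C$ at once and avoids the density-plus-continuity step (it does, however, lean on the identification $\mathcal{A}_C=\mathcal{O}_2^{U(1)}$, which the paper imports from Kawamura); the monomial argument is more self-contained and makes visible the degree shift $k\mapsto k+1$ that the paper implicitly reuses later. Both are complete proofs; no gaps.
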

\begin{proof}
 The algebra $\mathcal{A}_C$ is a $C^\ast$-subalgebra in $\mathcal{O}_2$, generated by monomials of type \eqref{UHF2}. The formula \eqref{delta} implies that $\delta$ is linear and involution-preserving. Since $\psi_1$ is an isometry, $\delta$ is a homomorphism.  
 
Let $a\in\mathcal{O}_2$ be a monomial of type \eqref{UHF2} for $k=j$. Then $\delta(a)$ is a monomial of type \eqref{UHF2} for $k=j+1$. Therefore, $\delta(a)\in \mathcal{A}_C$. It is easy to show that $\delta$ is continuous. Thus, the image of $\mathcal{A}_C$ under $\delta$ is contained in $\mathcal{A}_C$ and $\delta$ is a ${}^\ast$-endomorphism on $\mathcal{A}_C$. \end{proof}

Let $\gamma$ be a  $^\ast$-endomorphism on a $C^\ast$-algebra $\mathcal{A}$. A linear positive continuous map $\gamma_*\colon\mathcal{A}\to\mathcal{A}$, preserving involution is called \emph{a transfer operator} (with respect to $\gamma$), if for any $a,b\in\mathcal{A}$ the following condition is verified  \cite{Lebedev}.
 \begin{equation}
 \label{tr}
\gamma_*(\gamma(a)b)=a\gamma_*(b) 
 \end{equation}
If moreover  one has  \begin{equation}
 \label{tr2}\gamma\gamma_*(a)=\gamma(1)a\gamma(1),\end{equation}
for any $a\in\mathcal{A}$, then the transfer operator $\gamma_*$ is called \emph{full}.

 Define the following map on the algebra $\mathcal{A}_C$ 
\begin{equation}
\delta_*(a)=\psi_1^*a\psi_1 \mbox{ for any } a\in \mathcal{A}_C \label{transfer}
\end{equation}

\begin{lemma} The map
 $\delta_*$, given by \eqref{transfer} is a full transfer operator  $\delta_*\colon\mathcal{A}_C\rightarrow\mathcal{A}_C$ with respect to $\delta$.
 \end{lemma}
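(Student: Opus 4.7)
The plan is to verify the four ingredients in the definition of a full transfer operator: (i) $\delta_*$ sends $\mathcal{A}_C$ into $\mathcal{A}_C$; (ii) $\delta_*$ is linear, continuous, positive, and involution-preserving; (iii) the transfer identity \eqref{tr} holds; (iv) the fullness identity \eqref{tr2} holds. Items (ii)--(iv) reduce to trivial manipulations once one exploits $\psi_1^*\psi_1 = I$; the only step that requires real verification is (i).

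For (i), I would test $\delta_*$ on the monomials \eqref{UHF2} that span $\mathcal{A}_C = \mathcal{O}_2^{U(1)}$. Applied to $\psi_{i_1}\cdots\psi_{i_k}\psi_{j_k}^*\cdots\psi_{j_1}^*$ the map gives
\[
\psi_1^*\psi_{i_1}\cdots\psi_{i_k}\psi_{j_k}^*\cdots\psi_{j_1}^*\psi_1,
\]
and relation \eqref{CR1} collapses the outer factors: the result is $0$ unless $i_1 = j_1 = 1$, in which case it reduces to a balanced monomial $\psi_{i_2}\cdots\psi_{i_k}\psi_{j_k}^*\cdots\psi_{j_2}^*$ of the form \eqref{UHF2} with $k-1$ creators and $k-1$ annihilators; also $\delta_*(I) = \psi_1^*\psi_1 = I$. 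Thus $\delta_*$ carries the linear span of \eqref{UHF2} into itself. Since $\|\delta_*(a)\| \leq \|a\|$, the map is continuous, so by density it restricts to a map $\mathcal{A}_C \to \mathcal{A}_C$.

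For (ii), linearity is immediate from the formula, involution-preservation follows from $(\psi_1^* a \psi_1)^* = \psi_1^* a^* \psi_1$, and positivity from $\delta_*(b^*b) = (b\psi_1)^*(b\psi_1) \geq 0$. For (iii), a one-line computation using $\psi_1^*\psi_1 = I$ yields
\[
\delta_*(\delta(a)b) \;=\; \psi_1^*\psi_1\, a\,\psi_1^* b\,\psi_1 \;=\; a\,\psi_1^* b\,\psi_1 \;=\; a\,\delta_*(b).
\]
For (iv), note $\delta(I) = \psi_1\psi_1^*$, and therefore
\[
\delta(\delta_*(a)) \;=\; \psi_1(\psi_1^* a \psi_1)\psi_1^* \;=\; \delta(I)\,a\,\delta(I),
\]
which is exactly \eqref{tr2}.

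The only mild obstacle is step (i), where one must be sure the extra factors $\psi_1$ and $\psi_1^*$ on the outside do not push a balanced monomial out of $\mathcal{O}_2^{U(1)}$; the Cuntz relation \eqref{CR1} takes care of this by forcing either cancellation to a shorter balanced monomial or annihilation to zero. Everything else is a direct consequence of $\psi_1^*\psi_1 = I$ together with Lemma \ref{hom}.
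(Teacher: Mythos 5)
Your proof is correct and follows essentially the same route as the paper: the paper defers the membership and continuity claims to the argument of Lemma \ref{hom} and then verifies \eqref{tr} and \eqref{tr2} by the same one-line computations you give. The only difference is that you spell out explicitly how $\psi_1^*(\cdot)\psi_1$ acts on the balanced monomials \eqref{UHF2} (collapsing to a shorter balanced monomial or to zero), a detail the paper leaves implicit.
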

 \begin{proof}
 Similarly to the proof of Lemma \ref{hom} one can show that $\delta_*$ is linear continuous involution-preserving, and $\delta_*(a)\in\mathcal{A}_C$ for any $a\in\mathcal{A}_C$. Positiveness of this map is obvious.
 
  For any $a,b\in\mathcal{A}_C$ we have
   $$\delta_*(\delta(a)b)=\psi_1^*\psi_1a\psi_1^*b\psi_1=a\delta_*(b).$$
   This fact implies equation \eqref{tr}. 
   
It remains to show that this transfer operator is full. Indeed, for any $a\in\mathcal{A}_C$ we have
$$\delta\delta_*(a)=\psi_1\psi_1^*a\psi_1\psi_1^*=\delta(1)a\delta(1),$$
since $\delta(1)=\psi_1I\psi_1^*=\psi_1\psi_1^*$. This implies \eqref{tr2}.
   
   \end{proof}

\begin{proposition} The following conditions are satisfied in the $C^\ast$-subalgebra of the Cuntz algebra, corresponding to the Recursive Fermion System, and the transfer operator $\delta_*$.
 
\begin{equation}
\delta_*\zeta(X)=X,\hspace{0.3cm}X\in\mathcal{A}_C \label{transfer2}
\end{equation}

\begin{equation}\label{transfer3}
\delta_*(\mathbf{a})=0,
\hspace{0.3cm}n=0.\end{equation}
\end{proposition}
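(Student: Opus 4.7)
The plan is to verify both identities by direct computation, using only the Cuntz relation $\psi_i^*\psi_j=\delta_{i,j}I$ and the explicit forms of $\zeta$, $\delta_*$, and $\mathbf{a}$ already recorded in the excerpt.

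For \eqref{transfer2}, I would substitute the definition $\zeta(X)=\psi_1X\psi_1^*-\psi_2X\psi_2^*$ into $\delta_*(\zeta(X))=\psi_1^*\zeta(X)\psi_1$, expand linearly, and obtain the four factors $\psi_1^*\psi_1\,X\,\psi_1^*\psi_1$ and $\psi_1^*\psi_2\,X\,\psi_2^*\psi_1$. The first equals $X$ by \eqref{CR1}, and the second vanishes by the same relation, giving $\delta_*(\zeta(X))=X$. Since the computation stays inside $\mathcal{A}_C$ for $X\in\mathcal{A}_C$ by the previous lemma, nothing else is needed.

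For \eqref{transfer3}, I would simply write $\delta_*(\mathbf{a})=\psi_1^*(\psi_1\psi_2^*)\psi_1=(\psi_1^*\psi_1)\psi_2^*\psi_1=\psi_2^*\psi_1=0$, again a single application of \eqref{CR1}.

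There is no real obstacle here; both equations reduce to the orthonormality of the generators $\psi_1,\psi_2$. The only point I would underline before starting is that all intermediate expressions remain in $\mathcal{A}_C$, so that the identities are legitimate identities of endomorphisms/transfer operators on $\mathcal{A}_C$ and not merely of elements of $\mathcal{O}_2$.
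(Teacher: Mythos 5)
Your computation is correct and is essentially identical to the paper's own proof: both verify \eqref{transfer2} by expanding $\psi_1^*\zeta(X)\psi_1$ with the definition \eqref{RFS6} and applying $\psi_i^*\psi_j=\delta_{i,j}I$, and both verify \eqref{transfer3} from $\mathbf{a}=\psi_1\psi_2^*$ and $\psi_2^*\psi_1=0$. Nothing further is needed.
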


\begin{proof}
Equation \eqref{transfer3} holds, since $\mathbf{a}=\psi_1\psi_2^*$:
 $$\delta_*(\mathbf{a})=\psi_1^*\psi_1\psi_2^*\psi_1=0.$$

By definition of $\zeta$ \eqref{RFS6}, we have
$$\delta_*\zeta(X)=\psi_1^*\zeta(X)\psi_1=$$ 
$$=\psi_1^*(\psi_1 X \psi_1^*-\psi_2 X \psi_2^*)\psi_1=$$ $$=\psi_1^*\psi_1 X \psi_1^*\psi_1-\psi_1^*\psi_2 X \psi_2^*\psi_1=X.$$
\end{proof}

Consider a $C^*$-algebra
$\mathcal{B}:=\mathcal{B}(\mathcal{A}_C,\psi_1)$, generated by the algebra
 $\mathcal{A}_C$ and the isometry $\psi_1$. By definition, given in  \cite{Lebedev}, $\mathcal{A}_C$ is \emph{a coefficient algebra} for $\mathcal{B}$ if additionally the following is satisfied.

 \begin{equation}
\label{k1} \psi_1a=\delta(a)\psi_1,\ a\in \mathcal{A}_C.
\end{equation}

In our case this condition is satisfied, and we have the following.
\begin{lemma} Algebra $\mathcal{A}_C$ is a coefficient algebra for $\mathcal{B}$.
\end{lemma}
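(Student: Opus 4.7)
The plan is to verify directly the single outstanding requirement, namely relation \eqref{k1}, since by construction $\mathcal{B}=\mathcal{B}(\mathcal{A}_C,\psi_1)$ is already generated by $\mathcal{A}_C$ together with the isometry $\psi_1$. Thus the only content of the lemma is to check the intertwining identity $\psi_1 a=\delta(a)\psi_1$ for every $a\in\mathcal{A}_C$.

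First I would recall from \eqref{delta} that $\delta(a)=\psi_1 a\psi_1^\ast$, and from \eqref{CR1} that $\psi_1^\ast\psi_1=I$. Multiplying the defining formula for $\delta(a)$ on the right by $\psi_1$ then gives $\delta(a)\psi_1=\psi_1 a\psi_1^\ast\psi_1=\psi_1 a$, which is exactly \eqref{k1}. Because the calculation uses nothing beyond the isometry property of $\psi_1$ and the formula for $\delta$, it is valid for every $a\in\mathcal{A}_C$.

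With \eqref{k1} established and the generation property built into the definition of $\mathcal{B}$, the conditions from \cite{Lebedev} identifying $\mathcal{A}_C$ as a coefficient algebra for $\mathcal{B}$ are all met. There is no real obstacle here; the statement is essentially a restatement of the definition of $\delta$, and the only thing worth emphasizing in writing is the explicit use of $\psi_1^\ast\psi_1=I$ (rather than the incomplete relation $\psi_1\psi_1^\ast=\delta(1)\neq I$) so that the reader sees why the identity works on one side but not the other.
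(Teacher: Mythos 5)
Your proof is correct and matches the paper's (implicit) argument: the paper simply asserts that condition \eqref{k1} holds, and your one-line computation $\delta(a)\psi_1=\psi_1 a\psi_1^\ast\psi_1=\psi_1 a$ using $\psi_1^\ast\psi_1=I$ is exactly the verification being relied upon. Your remark about why the identity works on this side but not via $\psi_1\psi_1^\ast$ is a helpful clarification the paper omits.
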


Denote by $\mathcal{B}_0$ a vector space, consisting of finite sums:
\begin{equation}
\label{x}x=\psi_1^{\ast N}a_{\overline{N}}+...+\psi_1^\ast a_{\overline{1}}+ a_0+a_1\psi_1+...+a_N\psi_1^N,
\end{equation}  

where $a_k,a_{\overline{l}}\in\mathcal{A}_C$, $N\in\mathbb{N}\cup \{0\}$. Due to results of \cite{Anton,Lebedev}, $\mathcal{B}_0$ is a dense *-subalgebra in the $C^*$-algebra $\mathcal{B}$. Denote $\mathcal{A}_k=a_k\psi_1^k$ and $\mathcal{A}_{-k}=\psi_1^{\ast k}a_{\overline{k}}$.

The following condition, denoted by (*),  provides uniqueness of decomposition \eqref{x} and coefficients $a_k,a_{\overline{l}}$  \cite{Lebedev}:
 \begin{equation}
 ||a_0||\leq||x||
 \end{equation}
 for any $x\in\mathcal{B}_0$ of type \eqref{x}.
 
 Let $C(S^1,\mathcal{O}_2)$ denote the $C^\ast$-algebra of all continuous functions on the unit circle $S^1$ taking values in the algebra $\mathcal{O}_2$, with uniform norm:
 $$||a||=\sup\limits_{z\in S^1}||a(z)||, \mbox{ where } a\in C(S^1,\mathcal{O}_2).  $$
 Every $\mathcal{O}_2$-valued function $a\in C(S^1,\mathcal{O}_2)$ can be represented in a form of Fourier series
 \begin{equation}\label{fur}
 a(z)\approx\sum\limits_{n=-\infty}^{+\infty}z^n a_n,\ a_n=\frac{1}{2\pi}\int\limits_{S^1}a(e^{i\theta})e^{-in\theta}d\theta\in\mathcal{O}_2.
 \end{equation}
 
 And each element $a\in C(S^1,\mathcal{O}_2)$ can be approximated  in norm of the algebra $ C(S^1,\mathcal{O}_2)$ by finite linear combinations of the form (\ref{fur}).
 
 Using the action $\tau$ of the circle $S^1$ on $\mathcal{O}_2$ for any monomial $V\in\mathcal{O}_2$ define a function $\widetilde{V}\in C(S^1,\mathcal{O}_2)$ by the formula:
 $$\widetilde{V}(z)=\tau(z)(V).$$
 Denote by $\widetilde{\mathcal{O}_2}$ a closed subalgebra in the algebra  $C(S^1,\mathcal{O}_2)$, generated by functions of type $\widetilde{V}$. 
 
 One can show that Fourier coefficients in (\ref{fur}) $a_k$ lie in  corresponding spaces $\mathcal{A}_k$, using the same proof as presented in \cite{AGL}. Algebras $\widetilde{\mathcal{O}_2}$ and $\mathcal{O}_2$ are isomorphic. Therefore, for any $a\in \mathcal{O}_2$ we have
 $$||a_0||=|| \frac{1}{2\pi}\int\limits_{S^1}\widetilde{a}(e^{i\theta})e^{-in\theta}d\theta ||\leq||a ||.$$
Thus, condition (*) holds.

  According to \cite{Anton}, the algebra $\mathcal{B}$ is considered as  the crossed product 
$$\mathcal{B}=\mathcal{A}_C\times_{\delta}\mathbb{Z}=
\mathcal{B}(\mathcal{A}_C,\psi_1).$$ 
The elements of this crossed product are finite sums of the form \eqref{x}.

\begin{proposition} The $C^*$-algebra
$\mathcal{A}_C\times_{\delta}\mathbb{Z}$ is the Cuntz algebra.
\end{proposition}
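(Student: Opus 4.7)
The plan is to exploit the identification established in the preceding paragraphs: the crossed product $\mathcal{A}_C\times_{\delta}\mathbb{Z}$ has already been realized concretely as the $C^*$-subalgebra $\mathcal{B}=\mathcal{B}(\mathcal{A}_C,\psi_1)\subseteq\mathcal{O}_2$ (the uniqueness condition (*) having been verified via Fourier analysis on $S^1$). So the entire statement reduces to proving the equality of $C^*$-algebras $\mathcal{B}=\mathcal{O}_2$ inside $\mathcal{O}_2$.

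One inclusion is free: $\mathcal{B}\subseteq\mathcal{O}_2$ since $\mathcal{A}_C\subseteq\mathcal{O}_2$ and $\psi_1\in\mathcal{O}_2$. For the reverse inclusion, it suffices to exhibit both Cuntz generators $\psi_1,\psi_2$ as elements of $\mathcal{B}$. The generator $\psi_1$ belongs to $\mathcal{B}$ by construction. For $\psi_2$, I would use the key observation that the element $\psi_2\psi_1^*$ is a monomial of type \eqref{UHF2} with $k=1$ (one creation, one annihilation factor), hence lies in $\mathcal{A}_C=\mathcal{O}_2^{U(1)}\subseteq\mathcal{B}$. Multiplying on the right by $\psi_1\in\mathcal{B}$ and applying the isometry relation $\psi_1^*\psi_1=I$ from \eqref{CR1} gives
\[
(\psi_2\psi_1^*)\psi_1=\psi_2(\psi_1^*\psi_1)=\psi_2\in\mathcal{B}.
\]
Since $\mathcal{B}$ is a $C^*$-algebra containing both $\psi_1$ and $\psi_2$, it contains the $C^*$-algebra they generate, namely $\mathcal{O}_2$. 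This yields $\mathcal{O}_2\subseteq\mathcal{B}$ and completes the equality.

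Combining this with the identification $\mathcal{B}=\mathcal{A}_C\times_{\delta}\mathbb{Z}$ from the prior discussion gives $\mathcal{A}_C\times_{\delta}\mathbb{Z}\cong\mathcal{O}_2$, which is the proposition. There is no serious obstacle here: all the substantive work was done beforehand in establishing that $\mathcal{A}_C$ is a coefficient algebra and that the uniqueness condition (*) holds (which in turn guarantees that the abstract crossed product coincides with the concrete subalgebra $\mathcal{B}$). The only step in the present argument is the single identity $\psi_2=(\psi_2\psi_1^*)\psi_1$, which shows that the missing Cuntz generator is already manufactured from $\mathcal{A}_C$ and $\psi_1$ alone.
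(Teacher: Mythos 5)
Your proposal is correct and follows essentially the same route as the paper: the paper's proof also reduces to exhibiting $\psi_1=(\psi_1\psi_1^*)\psi_1$ and $\psi_2=(\psi_2\psi_1^*)\psi_1$ as elements of the form \eqref{x} with coefficients $\psi_1\psi_1^*,\psi_2\psi_1^*\in\mathcal{A}_C$. Your write-up merely makes the two inclusions $\mathcal{B}\subseteq\mathcal{O}_2$ and $\mathcal{O}_2\subseteq\mathcal{B}$ explicit, which is a slightly cleaner presentation of the same argument.
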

\begin{proof} The generators of the Cuntz algebra $\psi_1$ and $\psi_2$ have a form \eqref{x}. Indeed, taking $a_1=\psi_1\psi_1^*$ and  $a_i=0$ for $i=0,2,3,...$ we get $x=a_1\psi_1=\psi_1\psi^*_1\psi_1=\psi_1$. And if
$a_1=\psi_2\psi_1^*$ and $a_i=0,$ $i=0,2,3,...$, then
$x=a_1\psi_1=\psi_2\psi_1^*\psi_1=\psi_2$.
\end{proof}

An example of crossed product, considered in \cite{Anton}, is the Cuntz-Krieger algebra $\mathcal{O}_A$ -- a  $C^*$-algebra, generated by partial isometries $Q_i=S_i^*S_i$ $P_i=S_iS_i^*$, satisfying conditions 
$$P_iP_j=0, i\neq j; \hspace{3cm} Q_i=\sum_{r=1}^rA(i,r)P_r,$$ where $A$ is an $n\times n$-matrix with $A(i,j)\in\{0,1\}$, where each row and column is non-zero.

Denote $S_{\O}=1,$ $S_{\mu}=S_{i_{1}}S_{i_{2}}\cdot\cdot\cdot
S_{i_{k}}$ and consider the $C^*$-algebra $\mathcal{F}_A$, generated by elements of type $S_{\mu}P_iS^*_{\nu}$, where $|\mu|=|\nu|=k$,
$k=0,1,...,n,$ $i\in 1,...,n$. One can show that the Cuntz-Krieger algebra is in fact a crossed product $\mathcal{O}_A=C^*(\mathcal{F}_A,S)\cong
\mathcal{F}_A\times_{\delta}\mathbb{Z},$ where $S$ is an isometry introduced in \cite{Anton}. Taking $S_1=\psi_1,$
$S_2=\psi_2$ and a $2\times 2$-matrix $A$ with all entries equal to one, we get the Cuntz algebra $\mathcal{O}_2$ as a special case of the Cuntz-Krieger algebra. That means, $\mathcal{O}_2=C^*(\mathcal{F}_A,S)\cong
\mathcal{F}_A\times_{\delta}\mathbb{Z}$, where $\mathcal{F}_A$
is generated by monomials 
$\psi_{i_{1}}\cdot\cdot\cdot\psi_{i_{k}}\psi_i\psi_i^*\psi^*_{k_{k}}\cdot\cdot\cdot\psi^*_{j_{1}},$
$i=1,2$ and $S=\frac{1}{\sqrt{2}}(\psi_1+\psi_2).$ One can see that $\mathcal{F}_A\subset\mathcal{O}^{U(1)}_2$, but
$\mathcal{F}_A\neq \mathcal{O}^{U(1)}_2$ and $\mathcal{F}_A$ is not the image of embedding of the CAR algebra in $\mathcal{O}_2$, since it does not contain generators such as $\psi_1\psi^*_2$. 

Thus, the example described in this work shows that the Cuntz algebra can be represented as a crossed product of the CAR algebra by endomorphism induced by $\psi_1$, i.e.
$\mathcal{O}_2\cong\mathcal{A}_C\times_{\delta}\mathbb{Z}$, where $\mathcal{A}_C$ is a $U(1)$-invariant subalgebra in $\mathcal{O}_2$.


\begin{thebibliography}{1}
 \bibitem{AGL} Aukhadiev, M.A. \it{Infinite-dimensional compact quantum semigroup} / M.A. Aukhadiev, S.A. Grigoryan, E.V. Lipacheva // Lobachevskii Journal of Mathematics. -- 2011. -- Vol. 32. -- No 4. -- P. 304--316.
\bibitem{Emh}Gerard G. Emch, Algebraic methods in statistical mechanics and quantum field theory, Wiley-Interscience, 1972;
\bibitem{Sarda} Bogolubov, N.N., Logunov, A.A., Oksak, A.I., Todorov, I. General principles of quantum field theory, Springer, 1990;
\bibitem{Bratelli} Ola Bratteli, Derek W. Robinson, Operator Algebras and Quantum Statistical Mechanics, v.1, Springer 2003

\bibitem{Kawamura} M. Abe and K. Kawamura \it{Recursive fermion system in Cuntz algebra. I - Embeddings of fermion algebra into Cuntz algebra}, Comm. Math. Phys. 228 (2002) 85-101;
\bibitem{DR} S. Doplicher, J.E. Roberts \it{ Why there is a field algebra with a compact gauge group describing the superselection structure in particle physics}, Comm. Math. Phys. 131, 51-107 (1990)

\bibitem{Horuzhi} Horuzhy, S. S. Introduction to Algebraic Quantum Field Theory, Springer Verlag,1990
\bibitem{DH} S. Doplicher, R. Haag, J. E. Roberts. \it{Local observables and particle statistics I}. Commun. Math Phys. 23, (1971), 199–230. \it{Local observables and particle statistics II}. Commun. Math Phys. 35, (1974), 49–85 .
\bibitem{Buchholz} D. Buchholz, K. Fredenhagen. \it{Locality and the structure of particle states}. Commun. Math. Phys. 84, (1982), 1–54.
\bibitem{DR2} S. Doplicher, J.E. Roberts \it{Endomorphisms of C*-algebras}, Cross Products and Duality for Compact Groups, Ann. Math. 130, 75-119 (1989)
\bibitem{DR3} Doplicher S., Roberts J. E.\it{ A New Duality Theory for Compact Groups}. Invent. Math. 98, 157-218 (1989);
\bibitem{Anton} A.B. Antonevich, V.I. Bakhtin, A.V. Lebedev \it{Crossed product of a C*-algebra by an endomorphism, coefficient algebras and transfer operators}. Math. Sb., 202, (2011), 1253-1283.
\bibitem{Cuntz} Cuntz J. \it{Simple C*-algebras generated by isometries}, Commun. Math. Phys., 1977, V.57, 173-185.
\bibitem{DR4} Doplicher, S. and Roberts, J.E. \it{Duals of compact Lie groups realized in the Cuntz algebras and their actions on C*–algebras}. J. Funct. Anal. 74, 96–120 (1987).
\bibitem{Lebedev} A.V. Lebedev, A. Odzijewicz \it{Extensions of C*-algebras by partial isometries}. Math. Sb., 195, (2004), 951-982.


\end{thebibliography}

\end{document}